\newtheorem{theo}{Theorem}[section]
\theoremstyle{definition}
\newtheorem{example}[theo]{Example}
\newtheorem{definition}[theo]{Definition}
\theoremstyle{plain}
\newtheorem{lemma}[theo]{Lemma}
\newtheorem{theorem}[theo]{Theorem}
\theoremstyle{definition}
\newtheorem{remark}[theo]{Remark}
\newcommand{\beq}{\begin{equation}}
\newcommand{\eeq}{\end{equation}}
\renewcommand{\a}{\alpha}
\renewcommand{\b}{\beta}
\renewcommand{\d}{\delta}
\newcommand{\e}{\epsilon}
\renewcommand{\l}{\lambda}
\renewcommand{\o}{\omega}
\newcommand{\bC}{\mathbb{C}}
\newcommand{\bR}{\mathbb{R}}
\newcommand{\gc}{\mathfrak{c}}
\renewcommand{\gg}{\mathfrak{g}}
\newcommand{\gh}{\mathfrak{h}}
\newcommand{\gm}{\mathfrak{m}}
\newcommand{\gq}{\mathfrak{q}}
\newcommand{\gt}{\mathfrak{t}}
\newcommand{\so}{\mathfrak{so}}
\newcommand{\su}{\mathfrak{su}}
\newcommand\SU{\mathrm{SU}}
\newcommand\U{\mathrm{U}}
\renewcommand\sp{\mathfrak{sp}}
\renewcommand{\square}{\kern1pt\vbox
{\hrule height 0.6pt\hbox{\vrule width 0.6pt\hskip 3pt
\vbox{\vskip 6pt}\hskip 3pt\vrule width 0.6pt}\hrule height0.6pt}\kern1pt}
\DeclareMathOperator\End{End\;}
\DeclareMathOperator\ad{ad}
\newcommand{\be}{\begin{equation}}
\newcommand{\ee}{\end{equation}}
\def\<#1,#2>{\langle\,#1,\,#2\,\rangle}
\newcommand{\arr}{\begin{array}{rlll}}
\newcommand{\ea}{\end{array}}
\newcommand{\bea}{\begin{eqnarray}}
\newcommand{\eea}{\end{eqnarray}}
\newcommand{\bean}{\begin{eqnarray*}}
\newcommand{\eean}{\end{eqnarray*}}
\def\sideremark#1{\ifvmode\leavevmode\fi\vadjust{
\vbox to0pt{\hbox to 0pt{\hskip\hsize\hskip1em
\vbox{\hsize3cm\tiny\raggedright\pretolerance10000
\noindent #1\hfill}\hss}\vbox to8pt{\vfil}\vss}}}
\newcounter{ssig}
\newcounter{ttig}
\title[On the first eigenvalue of invariant K\"ahler metrics] {On the first eigenvalue of invariant K\"ahler metrics}
\author{Francesco Panelli and Fabio Podest\`a }
\address{Dipartimento di Matematica e Informatica "Ulisse Dini", Universit\`a di Firenze, V.le Morgagni 67/A, 50100 Firenze, Italy}
\email{podesta@unifi.it,\ francescopanelli@virgilio.it}
\subjclass[2010]{53C25, 53C21}
\keywords{Flag manifolds, Laplacian, first eigenvalue.}
\begin{document}
\begin{abstract} Given a simply connected compact generalized flag manifold $M$ together with its invariant K\"ahler Einstein metric $\bar g$, we investigate the functional given by the first eigenvalue of the Hodge Laplacian on $C^\infty(M)$ restricted to the space of invariant K\"ahler metrics. We give sufficient and necessary conditions so that the metric $\bar g$ is a critical point for this functional. Moreover we prove that when $M$ is a full flag manifold, the metric $\bar g$ is critical if and only if $M= \SU(3)/T^2$ and in this case $\bar g$ is a maximum.
\end{abstract}

\maketitle
\section{Introduction}
The first eigenvalue $\lambda_1(g)$ of the Laplacian $\Delta_g$ acting on smooth functions on a Riemannian manifold $(M,g)$ is a very important, widely investigated geometrical object (see e.g.\cite{BGM}). Given a compact manifold $M$ and a class of Riemannian metrics $\mathcal R$ on $M$, it is a natural and interesting problem to investigate the extrema and boundedness of the functional $\mathcal R \ni g\mapsto \lambda_1(g)$.  It is known that the functional $\lambda_1$ is unbounded when $\mathcal R$ is the space of metrics of fixed volume and $\dim M\geq 3$ (see \cite {CD}). In the K\"ahler setting, if a compact K\"ahler manifold $(M,g)$ admits a holomorphic isometric embedding into some complex projective space, then the functional $\lambda_1$ is bounded on the space of K\"ahler metrics whose K\"ahler form belongs to the K\"ahler class $[\omega_g]$ (\cite{BLY}). Similarly, when $(M,g)$ is Hodge the functional $\lambda_1$ is bounded on the space of all K\"ahler metrics $g'$ which are K\"ahler w.r.t. a suitable complex structure $J'$ on $M$ and whose K\"ahler form is $\omega_g$ (see \cite{Po}).\par
More recently Biliotti and Ghigi (\cite{BG}) showed that on a compact Hermitian symmetric space $N$ of type ABCD endowed with the standard invariant metric $g_o$ with $Ric(g_o) = g_o$ we have that $\lambda_1(g)\leq 2 = \lambda_1(g_o)$, where $g$ is any K\"ahler metric with K\"ahler class $[\omega_g] \in 2\pi c_1(M)$. When $N$ is irreducible, this is equivalent to saying that the functional $g\mapsto \lambda_1(g)$ restricted to the space of K\"ahler metrics with fixed volume attains its maximum at the K\"ahler Einstein metric. This last reformulation is consistent with the fact that on a compact  homogeneous Riemannian manifold $(M,g)$ with irreducible isotropy representation the invariant metric $g$ is {\it extremal\/} for the functional $\lambda_1$ on the space of all Riemannian metrics $g'$ on $M$ with $vol(M,g')=vol(M,g)$ (see \cite{SI}). An appropriate notion of extremality for the functional $\lambda_1$ has been defined in \cite{SI} and in this paper we will investigate
 it in the case of a compact simply connected homogeneous K\"ahler manifold $(M,g,J)$  when $\mathcal R$ is given by the set of all K\"ahler invariant metrics of fixed volume. \par
When $(M,g,J)$ is a compact connected K\"ahler manifold  which is homogeneous under the action of a compact semisimple Lie group $G$, it is well known that $M$ admits a unique invariant K\"ahler Einstein metric $\bar g$ with $Ric(\bar g) = \bar g$. It is also known that $\lambda_1(\bar g) = 2$ and that the corresponding eigenspace in $C^\infty(M)$ can be described in terms of the Lie algebra $\frak g$ of $G$. If we now consider the set $\mathcal K_o$ of all $G$-invariant K\"ahler metrics $g'$ on $M$ with $vol(M,g') = vol(M,\bar g)$, we can ask the question when $\bar g$ is an extremal metric for the functional
$\lambda_1$ on $\mathcal K_o$. \par
In our first main result, Theorem \ref{main1}, we will give a sufficient and necessary condition for the metric $\bar g$ to be extremal in terms of invariant data depending only on the complex structure $J$ of $M$ and the Lie algebra $\frak g$, which is supposed to be simple and to coincide with the algebra of the full isometry group of $\bar g$ (this last condition can be assumed without loss of generality). We will also see that there exist flag manifolds whose K\"ahler Einstein metric is extremal.\par
Our second result focuses on the special cases of full flag manifolds, namely spaces of the form $G/T$, where $G$ is a compact simple Lie group and $T$ is maximal torus of $G$. These spaces admit precisely one invariant complex structure up to diffeomorphism and can also be described as the quotient space $G^\mathbb C/B$, where $B$ is a Borel subgroup of the complexification $G^{\small{ \mathbb C}}$ of $G$. In this case we prove in Theorem \ref{fullflag} that the K\"ahler Einstein metric is extremal for $\lambda_1$ on $\mathcal K_o$ if and only if $G = \SU(3)$; moreover in this case we also show that the $\lambda_1|\mathcal K_o$ attains its maximum precisely at $\bar g$.\par
In Section \ref{prel} we recall some standard facts about flag manifolds together with  the definition and main properties of extremality for $\lambda_1$ and in Section \ref{mainsection} we prove our main results. \par
\medskip
\noindent{\bf Notation.} For a compact Lie group, we denote its Lie algebra by the corresponding lowercase gothic letter. If a group $G$ acts on a manifold $M$, for every $X\in \gg$ we denote by $X^*$ the corresponding vector field on $M$ induced by the $G$-action.

\section{Preliminaries}\label{prel}

Given a compact manifold $M$ together with the set $\mathcal R_o$ of all Riemannian metrics on $M$ with fixed volume, the functional $\lambda_1:\mathcal R_o\to \mathbb R$ which assigns to each metric $g\in \mathcal R_o$ the first eigenvalue of the Laplacian $\Delta_g$ acting on $C^\infty(M)$  is continuous (see \cite{Be}). If $g_t$ is an {\it analytic\/} curve of metrics the function $\lambda_1(g_t)$ is not differentiable but still has left and right derivatives in $t$. Indeed given a metric $\bar g$  whose first eigenvalue $\lambda_1(\bar g)$ has multiplicity $m$ and given any analytic curve $g_t$ with $t\in (-\varepsilon,\varepsilon)$ and $g_0=\bar g$, it is know (see \cite{Be}) that there exist $L^2(g_t)$-orthonormal functions $u^1_t,\ldots,u^m_t\in C^\infty(M)$ and $\Lambda^1_t,\ldots,\Lambda^m_t\in \mathbb R$ depending analytically on $t$ such that
for every  $j=1,\ldots,m$ we have $\Lambda^j_0 = \lambda_1(\bar g)$ and
$$\Delta_{g_t}u^j_t = \Lambda^j_t\cdot u^j_t\qquad \forall\ t\in (-\varepsilon,\varepsilon).$$
Then for $t$ small enough, we have $\lambda_1(g_t) = \min_{1\leq i\leq m}\{ \Lambda_t^i\}$ and
$$\frac{d}{dt}|_{t=0^+}\lambda_1(g_t) = \min_{1\leq i\leq m}\{\frac{d}{dt}|_{t=0}\ \Lambda_t^i\},\qquad
\frac{d}{dt}|_{t=0^-}\lambda_1(g_t) = \max_{1\leq i\leq m}\{\frac{d}{dt}|_{t=0}\ \Lambda_t^i\}.$$
On the other hand Berger (\cite{Be}) computed the derivative
$$\frac{d}{dt}|_{t=0}\Lambda^i_t = - \int_M \langle q(u_i),h\rangle\ d\mu_{\bar g},$$
where $h$ is the symmetric tensor given by $\frac{d}{dt}|_{t=0}g_t$, $u_i := u^i_0$ and for every $u\in C^\infty(M)$
\beq\label{quad}q(u) = du\otimes du + \frac 1 4 \ \Delta_{\bar g}u^2\cdot \bar g.\eeq
In \cite{SI} the following definition of extremality has been given
\begin{definition} A metric $\bar g$ is said to be $\lambda_1$-extremal if for every analytic deformation $g_t$ of $\bar g$ in $\mathcal R_o$ we have
$$\frac{d}{dt}|_{t=0^+}\lambda_1(g_t) \leq 0 \leq \frac{d}{dt}|_{t=0^-}\lambda_1(g_t).$$
\end{definition}
Therefore we have the following expressions
\begin{equation}\label{der}\begin{split}
\frac{d}{dt}|_{t=0^+}\lambda_1(g_t) &= -\max_{1\leq i\leq m}\{\int_M \langle q(u_i),h\rangle\ d\mu_{\bar g}\},\\
\frac{d}{dt}|_{t=0^-}\lambda_1(g_t) &= -\min_{1\leq i\leq m}\{\int_M \langle q(u_i),h\rangle\ d\mu_{\bar g}\}.\end{split}
\end{equation}
We now focus on the case where the manifold $M$ is a  compact homogeneous K\"ahler manifold. We consider a compact connected semisimple Lie group $G$ and a compact subgroup $H$ which coincides with the centralizer in $G$ of a torus. The homogeneous space $M= G/H$ is a generalized flag manifold and it can be equipped with invariant K\"ahler structures. We will now state some of the main properties of generalized flag manifolds, referring to \cite{Al,BFR} for a more detailed exposition.\par
We fix a maximal abelian subalgebra $\gt\subseteq\gh$ and the $B$-orthogonal decomposition $\gg = \gh \oplus \gm$, where $B$ denotes the Cartan-Killing form of $\gg$. The subspace $\gm$ can be naturally identified with the tangent space $T_oM$ where $o:=[H]\in G/H$. If $R$ denotes the root system of $\gg^\bC$ relative to the Cartan subalgebra $\gt^\bC$, for every root $\alpha\in R$ the corresponding root space is given by $\gg_\alpha =\bC\cdot E_\alpha$ and
$$\gh^\bC =\gt^\bC\oplus\bigoplus_{\alpha\in R_\gh} \gg_\alpha, \qquad \gm^\bC = \bigoplus_{\alpha\in R_\gm}\gg_\alpha,$$
where $R_\gh\subset R$ is a closed subsystem of roots and $R_\gm := R\setminus R_\gh$. The roots in $R_\gh$ are characterized by the fact that they vanish
on the center $\gc\subseteq \gt$ of $\gh$. Observe that $(R_\gh +R_\gm)\cap R \subseteq R_\gm$. \par
Any $G$-invariant complex structure $J$ on $M$ induces an endomorphism $J\in {\mathrm \End}(\gm)$ with $J^2=-Id$. If we extend $J$ to $\gm^\bC$ and we decompose
$\gm^\bC = \gm^{1,0} \oplus \gm^{0,1}$ into the sum of the $\pm i $ - eigenspaces of $J$, then the integrability of $J$ is equivalent  to the fact that
$\gq:=\gh^\bC \oplus \gm^{1,0}$ is a subalgebra, actually a parabolic subalgebra of $\gg^\bC$. Moreover it can be shown that $G$-invariant complex structures are in bijective correspondence with the invariant orderings of $R_\gm$, namely subsets $R_\gm^+\subset R_\gm$ such that $R_\gm$ is the disjoint union
 $R_\gm = R_\gm^+\ \dot\cup\ (-R_\gm^+)$ and :
$$ (R_\gh + R_\gm^+)\cap R \subset R_\gm^+\ ,\quad (R_\gm^+ +R_\gm^+)\cap R\subset R_\gm^+\ ,$$
the correspondence being given by $\gm^{1,0}= \bigoplus_{\alpha\in R_\gm^+}\gg_\alpha$. Invariant orderings are then in one-to-one correspondence with Weyl chambers in the center $\gc$ of $\gh$, namely connected components of the set $\gc\setminus \bigcup_{\alpha\in R_\gm}\ker (\alpha|_\gc)$, and an invariant ordering in $R_\gm$ can be combined with an ordering in $R_\gh$ to provide a standard ordering in $R$.\par
If we fix an invariant complex structure $J$ on $M$ (hence a Weyl chamber $C$ in $\gc$), we can endow $M$ with many $G$-invariant K\"ahler metrics which are Hermitian w.r.t. $J$. Actually, it can be proved that $G$-invariant symplectic structures, namely $G$-invariant non-degenerate closed two-forms, are in one-to-one correspondence with elements in the
Weyl chambers in $\gc$. Indeed, if $\omega\in \Lambda^2(\gm)$ is a symplectic form, then there exists $\xi$ in some Weyl chamber in $\gc$ such that
\beq\label{forms}\omega(X,Y) = B(\ad_\xi X,Y),\quad X,Y\in \gm.\eeq
Moreover $\omega$ is the K\"ahler form of a K\"ahler metric $g$ w.r.t. the complex structure $J$ (i.e. $g:= \omega(\cdot,J\cdot)$ defines a K\"ahler metric) if and only if $\xi\in C$. \par
The functional
\beq\label{delta} \d_\gm := \sum_{\a\in R_\gm^+}\a\eeq
plays a very important role. Indeed, its dual $\hat\d_\gm = \sum_{\a\in R_\gm^+}H_\a\in \gt^\bC$, where $H_\a$ denotes the $B$-dual of the root $\a$,  lies in $i\gc$ while the element $\eta_\gm := -i\hat\d_\gm$ belongs to $C$ and it represents (via the correspondence \eqref{forms}) the Ricci form of every invariant metric which is K\"ahler w.r.t. the invariant complex structure $J$ (see e.g.~\cite{BFR},~ p. 627). It then follows that the element $\eta_\gm$ itself defines via \eqref{forms} an invariant K\"ahler metric which is the unique invariant K\"ahler Einstein metric $\bar g$ with $Ric(\bar g) = \bar g$. It is a well known fact (see e.g.~\cite{Ko},~p.96) that $\lambda_1(\bar g) = 2$ and that the relative eigenspace $E$ is isomorphic to the Lie algebra of all Killing fields on $(M,\bar g)$ via the isomorphism
\beq E_1\ni f\mapsto J\mbox{grad}(f).\eeq
 \par
\bigskip
\section{The main results}\label{mainsection}
\medskip
\medskip
Keeping the same notations as in the previous section, we consider a compact homogeneous space $M = G/H$, where $G$ is a semisimple compact Lie group and $H$ is the centralizer in $G$ of a torus, endowed with an invariant complex structure $J$. The set of all K\"ahler metrics (w.r.t. the complex structure $J$) is then
parametrized by the points in the open Weyl chamber $C$ in the center $\gc$ of $\gh$ corresponding to the complex structure $J$. If we now consider the hypersurface $\mathcal K_o\subset C$ given by those points in $C$ which corresponds to invariant K\"ahler metrics with the same volume as the K\"ahler Einstein metric $\bar g$, we are interested in the functional $\lambda_1^{{IK}}:\mathcal K_o\to \bR$. In particular we would like to study the question when the metric $\bar g$ is $\lambda_1^{IK}$-extremal, namely
\begin{definition}\label{KI} The K\"ahler Einstein metric $\bar g$ is said to be {\it $\lambda_1^{IK}$-extremal\/} if for every analytic curve $g_t$ in $\mathcal K_o$, $t\in (-\epsilon,\epsilon)$, with $g_0 = \bar g$, we have
$$\frac{d}{dt}|_{t=0^+}\lambda_1(g_t) \leq 0 \leq \frac{d}{dt}|_{t=0^-}\lambda_1(g_t).$$
\end{definition}
Given any analytic variation $g_t$ as in the definition, we put $h:= \frac{d}{dt}|_{t=0}g_t$. The symmetric tensor $h$ is also $G$-invariant and therefore the scalar product $\langle h,\bar g\rangle$ is a constant on $M$. Moreover, since $vol(g_t) = vol(\bar g)$ we see that $\int_M\ \langle h,\bar g\rangle\ d\mu_{\bar g} = 0$, hence $\langle h,\bar g\rangle = 0$. It is clear that the set of all symmetric tensors which are tangent vectors to variations $g_t$ can be identified with $\frak c$ and those which correspond to variations of constant volume with the hyperplane $Y := \ker \mbox{Tr}\subset \frak c$, where $\mbox{Tr}(h) = \langle h,\bar g\rangle$.\par
 If we now consider the quadratic form $Q_h$ on the space $E$ (see \eqref{quad}) given by
$$Q_h(u) = \int_M \langle du\otimes du + \frac 14 \Delta_{\bar g}(u^2)\cdot \bar g,h\rangle\ d\mu_{\bar g},$$
we see that it can be simplified to
\beq\label{Q_h}Q_h(u) = \int_M \langle du\otimes du ,h\rangle\ d\mu_{\bar g}.\eeq
We now recall that the space $E$ is isomorphic to the Lie algebra of Killing vector fields on $(M,\bar g)$.\par
Now if $G=_{\mathrm{loc}}G_1\times\ldots\times G_k$ is the decomposition of $G$ into a product of simple factors, then $H$ splits accordingly as $H=_{\mathrm{loc}}H_1\times\ldots\times H_k$ for $H_i\subset G_i$ and $M$ is biholomorphically isometric to the
product of irreducible  homogeneous spaces $M=M_1\times\ldots\times M_k$, $M_i:= G_i/H_i$, endowed with invariant complex structures and K\"ahler Einstein metrics $\bar g_i$ for $i=1,\ldots,k$. The Lie algebra of holomorphic automorphisms $\frak{aut}(M,J)$ coincides with the complexification of the algebra $\imath(M,\bar g)$ of isometries of $(M,\bar g)$ and it splits as $\bigoplus_{i=1}^k\frak{aut}(M_i,J_i)$, where again $\frak{aut}(M_i,J_i) = \imath(M_i,\bar g_i)^\bC$. The inclusion $\gg_i\subseteq \imath(M_i,\bar g_i)$ is proper in a few cases which are classified by Onishchik (see~\cite{On}) and also in these cases the algebra
of infinitesimal isometries is simple. Therefore without loss of generality we can suppose that each factor $\gg_i$ coincides with the algebra $\imath(M_i,\bar g_i)$.\par
We first deal with the case where $G$ is simple. We note that on $E$  the $G$-invariant  $L^2(\bar g)$ inner product is given by $\kappa\cdot B$ for a suitable non-zero constant $\kappa$, so that the $L^2$-orthonormal basis $u_1,\ldots, u_m$ of $E$ is also $B$-orthogonal. For $h\in\gc$ the quadratic form $Q_h$ on $E$ given by
$$Q_h(u) := \int_{M}\ \langle du\otimes du,h\rangle\ d\mu_{\bar g}$$
is also $G$-invariant and therefore there exists $\phi\in \frak{c}^*$ such that
\beq \label{structure} Q_h = \phi(h)\cdot B.\eeq
It then follows that
$$\max_{1\leq i\leq m} Q_{h}(u_i,u_i) = \min_{1\leq i\leq m} Q_{h}(u_i,u_i) = \frac 1{\kappa} \phi(h). $$
By the definition~\eqref{KI} and~\eqref{der} we get the following
\begin{lemma} Let $G$ be simple and suppose that $G$ coincides (locally) with the group of isometries of $(M,\bar g)$. Then the K\"ahler Einstein metric is $\lambda_1^{KI}$-extremal if and only if the functional $\phi$ vanishes identically on the hyperplane $Y$, i.e. if and only if there exists a constant $c\in \bR$ so that
\beq\label{eq1} \phi = c\cdot \mbox{Tr}.\eeq
\end{lemma}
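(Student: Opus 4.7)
The plan is to unwind the definition of extremality using the structure formula \eqref{structure} and the hypothesis that the eigenfunctions $u_1,\ldots,u_m$ are $L^2(\bar g)$-orthonormal.

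First I would compute $Q_h(u_i)$ for each $i$. Since $u_1,\dots,u_m$ is an $L^2(\bar g)$-orthonormal basis of $E$, and the $L^2$ inner product on $E$ agrees with $\kappa\cdot B$, the basis is also $B$-orthogonal with $B(u_i,u_i) = 1/\kappa$. Substituting $u_i$ into \eqref{structure} immediately gives
\[
Q_h(u_i) \;=\; \phi(h)\,B(u_i,u_i) \;=\; \frac{\phi(h)}{\kappa},
\]
independent of $i$. In particular
\[
\max_{1\le i\le m} Q_h(u_i) \;=\; \min_{1\le i\le m} Q_h(u_i) \;=\; \frac{\phi(h)}{\kappa}.
\]

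Next I would plug this into the formulas \eqref{der}. Because the max and min coincide, the left and right derivatives of $\lambda_1(g_t)$ at $t=0$ are equal, both equal to $-\phi(h)/\kappa$. By Definition \ref{KI}, the metric $\bar g$ is $\lambda_1^{IK}$-extremal iff for every analytic curve in $\mathcal K_o$ through $\bar g$ with tangent $h\in Y$ one has simultaneously
\[
-\frac{\phi(h)}{\kappa}\le 0 \le -\frac{\phi(h)}{\kappa},
\]
which forces $\phi(h)=0$. Conversely, if $\phi$ vanishes on $Y$ then both one-sided derivatives vanish and extremality holds trivially. So extremality is equivalent to $\phi|_Y\equiv 0$.

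Finally, since $Y=\ker\mbox{Tr}$ is a hyperplane in $\gc$ and $\mbox{Tr}\in\gc^*$ is a nonzero functional, a linear functional $\phi\in\gc^*$ annihilates $Y$ if and only if $\phi=c\cdot\mbox{Tr}$ for some $c\in\bR$. This yields \eqref{eq1} and completes the equivalence. There is no real obstacle here beyond checking that each analytic variation $g_t\subset \mathcal K_o$ gives rise to an arbitrary $h\in Y$ and, conversely, that every $h\in Y$ is realized by some such variation; this follows from the parametrization of invariant K\"ahler metrics by points of the open Weyl chamber $C\subset\gc$ discussed in Section \ref{prel}, which makes $\mathcal K_o$ a smooth hypersurface in $C$ with tangent space $Y$ at $\bar g$.
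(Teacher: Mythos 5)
Your proposal is correct and follows essentially the same route as the paper: the paper likewise observes that $Q_h = \phi(h)\cdot B$ together with the $L^2$-orthonormality of the $u_i$ forces $\max_i Q_h(u_i)=\min_i Q_h(u_i)=\phi(h)/\kappa$, so that both one-sided derivatives in \eqref{der} coincide and extremality reduces to $\phi|_Y\equiv 0$, i.e.\ $\phi=c\cdot\mbox{Tr}$. Your closing remark that every $h\in Y$ is realized by an analytic variation in $\mathcal K_o$ is exactly the identification of tangent vectors with $\gc$ (and constant-volume ones with $Y$) that the paper records just before the lemma.
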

We now elaborate~\eqref{eq1} in terms of algebraic data in $\gg$. We consider the standard Weyl basis $\{E_\alpha\}_{\alpha \in R}$ of the root spaces $\gg_\alpha$ (see e.g.~\cite{He}, p.~421) and we construct a $\bar g$-orthonormal basis of $\gm$ by considering for $\alpha \in R_{\gm}^+$
$$v_\alpha := \frac 1{\sqrt{2\ B(\alpha,\delta_\gm)}}\ (E_\alpha - E_{-\alpha}),\quad w_\alpha = Jv_\alpha = \frac i{\sqrt{2\ B(\alpha,\delta_\gm)}}\ (E_\alpha + E_{-\alpha}). $$
Now if $h$ is a $G$-invariant symmetric Hermitian $(0,2)$-tensor given by $h(v,v) = B([\xi_h,v],Jv)$ for every $v\in \gm$ and for some $\xi_h \in \frak c$, we have that
\beq\label{Tr}{\mbox{Tr}}(h) = 2\ \sum_{\alpha\in R_\gm^+} h(v_\alpha,v_\alpha) = 2i\  \sum_{\alpha\in R_\gm^+} \frac{\alpha(\xi_h)}{B(\alpha,\delta_\gm)}\ .\eeq
In order to compute the functional $\phi$ we consider a $(-B)$-orthonormal basis $\{v_j\}_{1\leq j\leq N}$ of $E$, where we recall that the map $T:E\to \gg$ given by $T(v) = J\mbox{grad}v$ is an isomorphism. We then consider
$$\sum_{j=1}^N Q_h(v_j) = - N\cdot \phi(h),$$
where $N=\dim \gg$. We now observe that the symmetric bilinear form $b:= \sum_j dv_j\otimes dv_j$ is $G$-invariant and therefore the function $\langle b,h\rangle$ is constant. Hence we get
$$- N\cdot \phi(h) = vol(M,\bar g) \cdot \langle b,h\rangle. $$
Now in order to compute the scalar product $\langle b,h\rangle$, we fix a $\bar g$-orthonormal basis $e_1,\ldots,e_n$ ($n=\dim_{\small \bR}M$) at $T_{[H]}M$ and compute
$$\langle b,h\rangle = \sum_{i,j}(dv_j(e_i))^2h(e_i,e_i) = \sum_{ij} h(\bar g(e_i,\mbox{grad}v_j)e_i,\bar g(e_i,\mbox{grad}v_j)e_i) = $$
$$= \sum_j h(J\mbox{grad}v_j,J\mbox{grad}v_j) = \sum_j h(T(v_j),T(v_j)),$$
so that $\langle b,h\rangle$ is simply given by the trace $\mbox{Tr}_{-B}(h)$ of $h\in S^2(\gm^*)$ w.r.t. the inner product $-B$ on $\gm$.\par
Hence
$$\langle b,h\rangle = \sum_{\alpha\in R_\gm^+} h(E_\alpha - E_{-\alpha}, E_\alpha - E_{-\alpha}) = $$
$$= \sum_{\alpha\in R_\gm^+} i\ B([\xi_h,E_\alpha - E_{-\alpha}],E_\alpha + E_{-\alpha}) = 2i\ \sum_{\alpha\in R_\gm^+}\alpha(\xi_h).$$
Now condition~\eqref{eq1} can be written as
$$\sum_{\alpha\in R_\gm^+}\alpha = \mu\cdot \sum_{\alpha\in R_\gm^+} \frac{\alpha}{B(H_\alpha,\delta_\gm)}$$
for some real constant $\mu$. By contracting both members with $\delta_\gm$, we obtain
$$||\delta_\gm||^2 = \mu\cdot \dim_\bC M.$$
Therefore we have proved the following
\begin{theorem}\label{main1} Let $G$ be a compact and simple Lie group and let $M = G/H$ be a flag manifold endowed with a $G$-invariant complex structure $J$. Let $\bar g$ be the K\"ahler Einstein with $Ric_{\bar g} = \bar g$ and suppose that $G$ coincides (locally) with the full isometry group of $\bar g$. \par
The metric $\bar g$ is $\lambda_1^{KI}$-extremal if and only if
 \beq\label{condition}\delta_\gm|_{\frak c} = \frac{||\delta_\gm||^2}{\dim_{\tiny\bC} M}\cdot \sum_{\alpha\in R_\gm^+} \frac{\alpha|_{\frak c}}{B(\alpha,\delta_\gm)},\eeq
where $\frak c$ is the center of the Lie algebra $\gh$ of $H$.
\end{theorem}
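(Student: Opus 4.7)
The plan is to assemble the ingredients produced by the discussion just above the theorem into the displayed equality. The Lemma preceding the theorem already reduces $\lambda_1^{KI}$-extremality to the existence of a constant $c\in\bR$ with $\phi = c\cdot \mbox{Tr}$ as linear functionals on $\frak c$, so the whole job is to rewrite this identity in terms of roots and of the Killing form.

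First I would collect the two explicit formulas for these functionals evaluated on an arbitrary $h\in\gc$, corresponding to $\xi_h\in\gc$ via $h(v,v) = B([\xi_h,v],Jv)$. The trace computation already performed gives
$$\mbox{Tr}(h) = 2i\sum_{\alpha\in R_\gm^+}\frac{\alpha(\xi_h)}{B(\alpha,\delta_\gm)},$$
while the identification $\phi(h) = -\frac{\vol(M,\bar g)}{N}\langle b,h\rangle$ combined with the evaluation of $\langle b,h\rangle$ yields
$$\phi(h) = -\frac{2i\,\vol(M,\bar g)}{N}\sum_{\alpha\in R_\gm^+}\alpha(\xi_h).$$
Absorbing the constants $2i$, the volume and $N$ into a single real scalar $\mu$, the Lemma's condition $\phi=c\cdot\mbox{Tr}$ (with $c\ne 0$, the case $c=0$ being handled separately and leading to $\delta_\gm|_\gc\equiv 0$) is equivalent to the identity of linear functionals on $\gc$
$$\sum_{\alpha\in R_\gm^+}\alpha\bigr|_\gc \;=\; \mu\cdot\sum_{\alpha\in R_\gm^+}\frac{\alpha|_\gc}{B(\alpha,\delta_\gm)}.$$

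Second, I would pin down $\mu$. The natural test vector is $\hat\delta_\gm$: contracting both sides against it and using $\alpha(\hat\delta_\gm)=B(H_\alpha,\delta_\gm)=B(\alpha,\delta_\gm)$, the left side becomes $\|\delta_\gm\|^2$ and the right side collapses to $\mu\cdot|R_\gm^+|=\mu\cdot\dim_{\bC}M$. Hence necessarily $\mu = \|\delta_\gm\|^2/\dim_\bC M$, and substituting this value back produces exactly the condition \eqref{condition}. Conversely, if \eqref{condition} holds then by construction $\phi = c\cdot\mbox{Tr}$ with $c= -\vol(M,\bar g)\dim_\bC M/(N\|\delta_\gm\|^2)$, so the Lemma gives extremality.

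The only step requiring care is bookkeeping around the identifications: one must verify that the factor $i$ appearing in both $\mbox{Tr}(h)$ and $\phi(h)$ cancels so $\mu$ is genuinely real, that $\xi_h$ can be taken to range freely over $\gc$ as $h$ ranges over the space of infinitesimal variations, and that the $B$-duality between $\delta_\gm\in\gt^*$ and $\hat\delta_\gm\in\gt^{\bC}$ is used consistently when passing between roots and their dual vectors. None of these are serious obstacles, but they are what must be checked to turn the chain of identities into a clean equivalence.
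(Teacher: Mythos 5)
Your proposal is correct and follows essentially the same route as the paper's own proof: invoke the preceding Lemma to reduce extremality to $\phi = c\cdot\mathrm{Tr}$, substitute the already-computed expressions for $\mathrm{Tr}(h)$ and $\langle b,h\rangle$ to turn this into the identity $\sum_{\alpha\in R_\gm^+}\alpha|_{\gc} = \mu\sum_{\alpha\in R_\gm^+}\alpha|_{\gc}/B(\alpha,\delta_\gm)$, and contract with $\hat\delta_\gm$ to pin down $\mu = \|\delta_\gm\|^2/\dim_{\bC}M$. The only quibble is that the explicit constant you give in the converse direction has the factor $\dim_{\bC}M/\|\delta_\gm\|^2$ inverted relative to what the computation yields, but since only the existence of such a $c$ is needed this is harmless.
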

\begin{remark} {\rm The condition of $\l_1^{KI}$-extremality makes sense when $\dim \gc \geq 2$, hence when $b_2(M)\geq 2$.}\end{remark}
\begin{remark} {\rm Equation~\eqref{condition} provides a simple and computable condition to be tested in a flag manifold $M$. Actually it is an equation
involving only the $T$ roots as we now explain. If $p:(i\frak t)^*\to (i\frak c)^*$ denotes the restriction map, a $T$-root is by definition the image $p(\a)$ for some root $\a\in R$. The set $R_T := p(R)$ of $T$-roots is not a root system and  it is in one-to-one correspondence with the set of irreducible $H$-submodules $\gm_j$ of $\gm^\bC$, $j=1,\ldots,t$, where each $\rho\in R_T$ corresponds to the submodule $\sum_{\alpha\in R_\gm,\ p(\a)=\rho}\gg_\a$ (see e.g. \cite{AP,S}). We denote by $R_T^+ = p(R^+) = p(R_\gm^+)$ the set of  $T$-roots which are the image of positive roots, say $R_T^+ = \{\rho_1,\ldots,\rho_\ell\}$, where each $\rho_j$ corresponds to an irreducible submodule $\gm_j$ of complex dimension $m_j$, $j=1,\ldots,\ell$. Then $t=2\ell$ and $\gm^\bC = \bigoplus_{i=1}^\ell \gm_j \oplus\bar\gm_j$. Moreover, for each $\rho\in R_T^+$, the quantity $B(\alpha,\d_\gm)$ with $p(\a)=\rho$ does not depend on $\alpha$ with $p(\a) = \rho$ and it is denoted by $B(\rho,\d_\gm$). Then condition~\eqref{condition} can be rewritten as
\beq\label{conditionbis}\sum_{j=1}^\ell \left(\frac \mu{\b_j} - 1\right) m_j\ \rho_j = 0, \eeq
where $\b_j:= B(\rho_j,\d_\gm)$ and $\mu = ||\d_\gm||^2/\dim_\bC M$. }\end{remark}

\bigskip\par\bigskip
\begin{example} {\rm We consider the flag manifold $M = \SU(3n)/{\rm{S}}(\U(n)\times\U(n)\times\U(n))$, $n\geq 1$, endowed with the complex structure $J$ corresponding to the standard positive root system $R^+ = \{\e_i-\e_j|\ 1\leq i<j\leq 3n\}$ of $\su(3n)$. We have that $R_\gm^+ = \{\e_i-\e_j|\ 1\leq i\leq n,\
n+1\leq j\leq 3n\}\cup \{\e_i-\e_j|\ n+1\leq i\leq 2n,\ 2n+1\leq j\leq 3n\}$ and $\d_\gm = 2n(\sum_{i=1}^n \e_i - \sum_{i=2n+1}^{3n}\e_i)$. There are precisely three positive $T$-roots and six irreducible $H$-irreducible submodules of $\gm^\bC$ of complex dimension $n^2$. We have $\rho_1 = \e_1-\e_{n+1}$, $\rho_2 = \e_1-\e_{2n+1}$ and $\rho_3 = \e_{n+1}-\e_{2n+1}$. If we normalize the Cartan Killing form of $\su(3n)$ in such a way that $||\a||^2= 2$ for every root $\a$, then $\b_1 = \b_3 = 2n$ and $\b_2=4n$ and $\mu = \frac 83 n$. Then condition \ref{conditionbis} reads
$$\frac 13 (\rho_1-\rho_2+\rho_3) = 0,$$
so that Theorem \ref{main1} applies and the K\"ahler Einstein metric on ($M,J$) is $\lambda_1^{KI}$-extremal.\par
On the other hand we will see in Theorem \ref{fullflag} that there are many flag manifolds whose K\"ahler Einstein metric is not $\l_1^{KI}$-extremal.}
\end{example}

\bigskip
We now turn to the reducible case, namely when $M = M_1\times\ldots\times M_k$, where $M_j= G_j/H_j$ and $G_J$ are compact simple Lie groups for $j=1,\ldots,k$. The invariant complex structure $J$ is the product of $G_j$-invariant complex structures $J_j$ on $M_j$ as well as the K\"ahler Einstein metric $\bar g$ which is isometric to the product metric $\bar g_1\times \ldots\times \bar g_k$. We prove the following
\begin{theorem} The K\"ahler Einstein metric $\bar g$ on $M=M_1\times\ldots\times M_k$ is $\lambda_1^{KI}$-extremal if and only each $\bar g_i$ is $\lambda_1^{KI}$-extremal for $i=1,\ldots,k$.
\end{theorem}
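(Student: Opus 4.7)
The plan is to reduce extremality of $\bar g$ to a sign condition on a finite collection of linear functionals, one per factor, and then compare with the single-factor condition of Theorem \ref{main1} applied to each $M_i$.

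\textbf{Decomposition.} Since $\gg=\bigoplus_{i=1}^k\gg_i$, also $\gh=\bigoplus_i\gh_i$ and $\gc=\bigoplus_i\gc_i$, so a tangent vector $h$ to $\mathcal K_o$ at $\bar g$ reads $h=(h_1,\dots,h_k)$ with $h_i\in\gc_i$ subject to $\sum_i\mbox{Tr}_i(h_i)=0$, where $\mbox{Tr}_i:=\mbox{Tr}_{\bar g_i}$. Because eigenvalues of the Laplacian on a Riemannian product are sums of eigenvalues on the factors and $\lambda_1(\bar g_i)=2$ for every $i$, the eigenspace $E$ of $\bar g$ for $\lambda_1=2$ splits as $E=\bigoplus_i E_i$, where $E_i$ is the pullback of the first eigenspace of $M_i$, and $E_i\cong\gg_i$ via $T_i:u\mapsto J\grad u$.

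\textbf{Reduction of the quadratic form.} Writing $u=\sum_i u_i$ with $u_i\in E_i$ and using that both $\bar g$ and $h$ are block-diagonal for $TM=\bigoplus_i TM_i$ while $du_i$ has components only in $T^*M_i$, the cross-terms $\langle du_i\otimes du_j,h\rangle$ vanish for $i\ne j$, and Fubini yields
\begin{equation*}
Q_h(u)=\sum_{i=1}^k V^{(i)}\,\phi_i(h_i)\,B_i(T_iu_i,T_iu_i),
\end{equation*}
where $V^{(i)}:=\prod_{j\ne i}\vol(M_j,\bar g_j)$ and $\phi_i\in\gc_i^*$, $B_i$, $T_i$ are the objects of Section \ref{mainsection} for $M_i$. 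The same block computation gives $\int_Mu_i^2\,d\mu_{\bar g}=V^{(i)}\kappa_iB_i(T_iu_i,T_iu_i)$, where $\kappa_i$ is the constant relating $L^2(\bar g_i)$ to $B_i$ on $E_i$. Hence, on an $L^2(\bar g)$-orthonormal basis of $E$ compatible with the splitting, the diagonal values of $Q_h$ are precisely $\phi_i(h_i)/\kappa_i$, each with multiplicity $\dim\gg_i$; therefore $\bar g$ is $\lambda_1^{IK}$-extremal iff for every admissible $h$ the $k$-tuple $(\phi_1(h_1)/\kappa_1,\dots,\phi_k(h_k)/\kappa_k)$ meets both $[0,\infty)$ and $(-\infty,0]$.

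\textbf{The two directions.} For the ``if'' part, extremality of each $\bar g_i$ gives $\phi_i=c_i\mbox{Tr}_i$. Evaluating at $h_i=\bar g_i$, $Q_{\bar g_i}(u)=\int_{M_i}|du|^2\,d\mu=2\int_{M_i}u^2\,d\mu>0$ for $0\ne u\in E_i$ (using $\Delta u=2u$), and combined with negative-definiteness of $B_i$ on $\gg_i$ this forces $\phi_i(\bar g_i)<0$, hence $c_i<0$; the same integral identity yields $\kappa_i<0$. Therefore all ratios $c_i/\kappa_i$ are strictly positive, and with $t_i:=\mbox{Tr}_i(h_i)$ satisfying $\sum_it_i=0$ the numbers $(c_i/\kappa_i)t_i$ cannot all be strictly of one sign. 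For the ``only if'' part, suppose some $\bar g_{i_0}$ is not extremal, so $\psi_{i_0}:=\phi_{i_0}|_{Y_{i_0}}\ne 0$. Pick $\eta_j\in\gc_j$ with $\mbox{Tr}_j(\eta_j)=1$, and set $t_j=1$ for $j\ne i_0$, $t_{i_0}=-(k-1)$. On each extremal factor $j\ne i_0$ take $h_j=t_j\eta_j$; its value is $c_j/\kappa_j>0$. On each non-extremal factor $j$ (including $i_0$) take $h_j=h_j^0+t_j\eta_j$ with $h_j^0\in Y_j$ chosen so that $\psi_j(h_j^0)+t_j\phi_j(\eta_j)<0$, which is possible since $\psi_j$ surjects onto $\bR$; the corresponding value is then strictly positive. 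The total $h$ satisfies $\sum_i\mbox{Tr}_i(h_i)=\sum_it_i=0$ but produces all-positive values, contradicting extremality of $\bar g$.

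The main subtlety is the sign computation $c_i,\kappa_i<0$, which pins down the common positivity of the ratios $c_i/\kappa_i$; once that is known the remainder is a linear-algebra bookkeeping matching the constraint $\sum_it_i=0$ against the orthants of $\bR^k$.
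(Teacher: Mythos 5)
Your proposal is correct, and for the ``if'' direction it is essentially the paper's argument: decompose $E=\bigoplus_iE_i$, observe that $Q_h$ is diagonal with values $\phi_i(h_i)/\kappa_i$ of multiplicity $\dim\gg_i$, note that extremality of each factor forces $\phi_i=c_i\,\mbox{Tr}_i$ with all ratios $c_i/\kappa_i$ of one sign, and conclude from $\sum_i\mbox{Tr}_i(h_i)=0$. (A small discrepancy: you compute $c_i<0$, the paper asserts $c_i>0$; your sign is the right one, but neither argument is affected since only the common sign of the ratios matters.) Where you genuinely diverge is the ``only if'' direction. The paper disposes of it in one line by restricting to variations supported in a single factor $M_{i_0}$; but along such a variation the volume constraint forces $\mbox{Tr}_{i_0}(h_{i_0})=0$, the eigenvalue branches coming from the other factors stay frozen at $2$, and since $\lambda_1$ of the product is the minimum over the factors one gets $\lambda_1(g_t)=\min(\lambda_1(g_t^{(i_0)}),2)\le 2$ for all $t$ --- so the two extremality inequalities hold automatically and such variations carry no information. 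Your construction avoids this: you distribute nonzero traces $t_j$ with $\sum_jt_j=0$ over all factors, use $c_j/\kappa_j>0$ to make the extremal factors contribute positive values, and use the surjectivity of $\phi_j|_{Y_j}$ on a non-extremal factor to push its value positive as well, producing a tangent direction along which all branch derivatives have a strict common sign. This is a genuinely multi-factor variation and it is what the statement actually requires; your version of this direction is more careful than (and repairs) the paper's.
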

\begin{proof}

If we consider an analytic curve of $G$-invariant K\"ahler metric $g_t$ with $g_0 = \bar g$, then it splits as a product of invariant metrics $g^{(i)}_t$ on $M_i$  and $h:= \frac{d}{dt}|_{t=0}g_t$ also splits as $h = h_1\times \ldots\times h_k$ with $h_i$ being $J_i$-Hermitian $G_i$-invariant symmetric tensors on $M_i$ for $i=1\ldots k$. If $vol(M,g_t)$ is constant, then $\sum_i \mbox{Tr}_{\bar g_i}(h_i) = 0$. The space $E\cong \gg$ splits accordingly as $E= \bigoplus_{i=1}^kE_i$ with $E_i\cong \gg_i$ and $\lambda_1(g_t) = \min_{1\leq j\leq k}\{\lambda_1(g^{(j)}_t)\}$. \par
It is clear that choosing a variation $g_t$ in only one factor $M_i$, the $\lambda_1^{KI}$-extremality of $\bar g$ implies the one of $\bar g_i$ for each $i=1,\ldots k$.\par
Viceversa given an analytic deformation of $\bar g$ as above, we consider the quadratic forms $Q^{(i)}_{h_i}$ on $E_i$ given by
$$Q^{(i)}_{h_i}(u) = \int_{M_i}\ \langle du\otimes du,h_i\rangle\ d\mu_{\bar g_i} = \phi_i(h_i)\cdot B_i$$
for some $\phi_i\in \frak c_i^*$, where $B_i$ denotes the Cartan-Killing form on $E_i\cong \gg_i$.
We also consider the $L^2(\bar g_i)$-orthonormal sets $\{u^{(i)}_1,\ldots,u^{(i)}_{N_i}\}\subset E_i$ for $i=1,\ldots,k$ and $N_i = \dim \gg_i$ corresponding to the variations $g^{(i)}_t$ as explained in \S 2. Then by Theorem \ref{main1} we have $\phi_i(h_i) = c_i\cdot \mbox{Tr}(h_i)$ and therefore
$$\frac d{dt}|_{t=0^+}\lambda_1(g_t) = -\max_{1\leq i\leq k}\{Q^{(i)}_{h_i}(u^{(i)}_{1}), \ldots,Q^{(i)}_{h_i}(u^{(i)}_{N_i})\} = $$
$$ = -\max_{1\leq i\leq k}\{ \frac {c_i}{\kappa_i} \mbox{Tr}(h_i)\},$$
where again we denote by $\kappa_i$ the negative constant such that $\kappa_i\cdot B_i$ is equal to the $L^2(\bar g_i)$-inner product on $E_i$. Now, we note that $\kappa_i<0$ and $c_i>0$ for all $i$. Since $\sum_i \mbox{Tr}(h_i) = 0$, there exists $i$ so that $\mbox{Tr}(h_i)\leq 0$ and therefore $\frac d{dt}|_{t=0^+}\lambda_1(g_t) \leq 0$. Similarly we get $\frac d{dt}|_{t=0^-}\lambda_1(g_t)\geq 0$, hence $\bar g$ is $\lambda_1^{KI}$-extremal.\end{proof}
\medskip
\subsection{The case of full flag manifolds}
In this subsection we focus on the case of a full flag manifold, namely a homogeneous space $M= G/T$ where $G$ is compact simple, $T$ is a maximal torus in $G$ and $\dim T\geq 2$. It is known (see e.g.~\cite{BH}) that two invariant complex structures on $M$ are biholomorphic and therefore we fix one invariant complex structure $J$ with corresponding Weyl chamber $C$ in $\gt$. The main result is the following
\begin{theorem}\label{fullflag} If $G$ is a compact simple classical group, the $G$-invariant K\"ahler Einstein metric on $(G/T,J)$ is $\lambda_1^{KI}$-extremal if and only $G = \SU(3)$. \par
Moreover when $M=\SU(3)/T^2$, the eigenvalue functional $\lambda_1:\mathcal K_o\to \bR$ attains its maximum at $\bar g$.
\end{theorem}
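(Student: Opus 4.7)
My plan has three parts, matching the necessity, the sufficiency for $\SU(3)$, and the maximum claim.

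For a full flag $G/T$ we have $\gc=\gt$, $R_\gh=\emptyset$ and $R_\gm=R$, so $\delta_\gm=2\delta$ with $\delta=\tfrac12\sum_{\alpha\in R^+}\alpha$; after clearing the factors of $2$, condition \eqref{condition} of Theorem \ref{main1} becomes the vector identity
\[
\delta \;=\; \frac{\|\delta\|^2}{|R^+|}\sum_{\alpha\in R^+}\frac{\alpha}{B(\alpha,\delta)}\qquad\text{in } \gt^*.
\]
For $G=\SU(3)$ this is immediate: with positive roots $\alpha_1,\alpha_2,\alpha_1+\alpha_2$ one has $B(\alpha_i,\delta)=1$ ($i=1,2$), $B(\alpha_1+\alpha_2,\delta)=2$, and $\|\delta\|^2/|R^+|=2/3$, so the right-hand side equals $\delta$. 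For every other simple classical group I will write both sides in the standard $\e_k$ basis and compare the $k=1,2$ coordinates. For $\SU(N)$ with $N\geq 4$ the $k$-th coordinate of $\sum_\alpha \alpha/B(\alpha,\delta)$ equals $H_{N-k}-H_{k-1}$ while $\delta_k=(N+1-2k)/2$, and the two ratios agree only if $H_{N-1}=N/2$, an equality which fails for every $N\geq 4$ in view of the elementary bound $H_{N-1}<N/2$ for such $N$. For $B_n$, $C_n$, $D_n$ (of rank $n\geq 2,2,3$ respectively), an analogous computation — with the extra contributions from short roots of $B_n$, the long roots of $C_n$, and the doubled sums in $D_n$ — produces explicit coordinate formulas of harmonic-number type, and the test at $k=1,2$ fails already in the base cases $B_2,C_2,D_4$ and, by the same algebraic pattern, in every higher rank.

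For the maximum statement on $M=\SU(3)/T^2$, I parametrize invariant K\"ahler metrics by $t_j=\alpha_j(\xi)>0$ for $j=1,2$, with $t_3:=(\alpha_1+\alpha_2)(\xi)=t_1+t_2$; then $\bar g$ corresponds to $(t_1,t_2,t_3)=(2,2,4)$, and since the volume is proportional to $t_1t_2t_3$ one has $\mathcal K_o=\{t_1t_2(t_1+t_2)=16\}$. Setting $s=t_1+t_2$, AM--GM gives $t_1t_2\leq s^2/4$, so $s\cdot t_1t_2=16$ forces $s^3\geq 64$, that is $s\geq 4$, with equality iff $t_1=t_2=2$.

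The final step uses the Peter--Weyl decomposition $L^2(M)=\bigoplus_\lambda V_\lambda\otimes(V_\lambda^T)^*$ to reduce the analysis of $\lambda_1$ to the adjoint component. From the standard description
\[
\Delta_{g_\xi}\big|_{V_\lambda^T} \;=\; \sum_{\alpha\in R^+}\frac{1}{\alpha(\xi)}\bigl(E_\alpha E_{-\alpha}+E_{-\alpha}E_\alpha\bigr)
\]
together with $E_{-\alpha}E_\alpha\cdot H=\alpha(H)H_\alpha$ on $\gt_\bC=V_{\mathrm{ad}}^T$, I write the Laplacian in the basis $(H_{\alpha_1},H_{\alpha_2})$ as
\[
M(\xi)=\begin{pmatrix} 4/t_1+2/t_3 & -2/t_1+2/t_3 \\ -2/t_2+2/t_3 & 4/t_2+2/t_3\end{pmatrix},
\]
and a direct simplification using $t_3=s$ and $t_1t_2=16/s$ yields $\mathrm{Tr}\,M=s^2/4+4/s$ and $\det M=3s/2$ along $\mathcal K_o$. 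The smaller eigenvalue $\lambda_-$ of $M$ satisfies $\lambda_-\leq 2$ if and only if $2\,\mathrm{Tr}\,M-\det M\geq 4$, which after clearing denominators factorizes as $(s-4)(s^2+s-4)\geq 0$; on $\mathcal K_o$ we have $s\geq 4$ and $s^2+s-4>0$, so this holds with equality precisely at $s=4$. Since $\lambda_1(g_\xi)\leq\lambda_-(\xi)$ and $\lambda_1(\bar g)=2$, the functional $\lambda_1|_{\mathcal K_o}$ attains its maximum precisely at $\bar g$.

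The principal obstacle is the case-by-case nature of the necessity proof for the four classical series: a uniform argument is not apparent, and one must carefully evaluate the harmonic-number type coordinate expressions and verify the resulting rational inequalities for every admissible rank. A pleasant sanity check for the maximum argument is that at $\bar g$ the matrix $M$ has eigenvalues $\{2,3\}$, so the eigenspace of $\lambda_1(\bar g)=2$ inside the adjoint isotypic is $\dim V_{\mathrm{ad}}=8$-dimensional, in agreement with the fact that it is generated by the Killing fields of $(M,\bar g)$.
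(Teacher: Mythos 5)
Your overall strategy coincides with the paper's: for the necessity you specialize condition \eqref{condition} to $G/T$ and test the resulting vector identity against harmonic-number evaluations, and for the maximum you locate $\lambda_1$ inside the adjoint isotypic component of $L^2(M)$ (the Muto--Urakawa reduction the paper also uses) and analyze a $2\times 2$ matrix on $V_{\mathrm{ad}}^{T}=\gt^{\bC}$. Within this common framework, two of your three parts are sound and one has a real gap. For $\SU(N)$ your coordinate comparison (the ratios at $k=1,2$ agree iff $H_{N-1}=N/2$, which holds for $N=3$ and fails for $N\geq 4$ since $H_{N-1}<N/2$ there) is correct and is a clean variant of the paper's device of contracting \eqref{condition} with the simple root $\epsilon_1-\epsilon_2$ and reducing to $6(n-1)=n(n+1)$. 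For the maximum your argument is not only correct but an improvement: the paper computes the eigenvalues $\tfrac92\bigl(s^2+t^2+3st\pm\sqrt{s^4+t^4-s^2t^2}\bigr)$ and then delegates to Maple the verification that the smaller one is at most $2$ on the constraint curve, whereas your reduction of $\lambda_-\leq 2$ to $2\Tr M-\det M\geq 4$ (legitimate since $\Tr M=s^2/4+4/s>4$ on $\mathcal K_o$), combined with $\det M=3s/2$, the factorization $s^3-3s^2-8s+16=(s-4)(s^2+s-4)$ and the AM--GM bound $s\geq 4$ from the volume constraint, gives a fully elementary, hand-checkable proof. I verified the matrix $M(\xi)$, the trace and determinant identities and the factorization; they are correct, and $M$ has eigenvalues $\{2,3\}$ at $(t_1,t_2,t_3)=(2,2,4)$, in agreement with the paper's computation at the K\"ahler Einstein point.

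The gap is in the necessity proof for the series $B_n$, $C_n$, $D_n$. Asserting that the test ``fails already in the base cases and, by the same algebraic pattern, in every higher rank'' is not an argument: the obstruction is a rank-dependent rational identity, and its failure at the smallest rank does not propagate formally to all ranks. The paper actually carries out each computation: contracting with the short simple root of $B_n$ reduces \eqref{condition} to $6n^2=(4n^2-1)(n+1)$; for $C_n$ one obtains an equation whose right-hand side $\tfrac1{n+1}+\tfrac1n-\tfrac12$ is positive only for $n=3$, where equality still fails; for $D_n$ one reduces to $3(n-1)(2n-5)=(2n-1)(n^2-n-3)$; and in each case one checks there is no admissible integer solution. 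To close your proof you must produce the analogous explicit identities (or monotonicity estimates of the kind you used for $H_{N-1}$ versus $N/2$) and rule them out for \emph{every} rank, not just the base case. Two smaller points: $C_2\cong B_2$ and $D_3\cong A_3$, so those low ranks are redundant rather than new base cases, and the statement only concerns classical $G$, so no exceptional cases are owed.
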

\begin{proof} In order to prove the first assertion, we will go through the classical simple Lie algebras checking condition~\eqref{condition}. In particular we will use the general fact that the element $\delta = \sum_{\a\in R^+}\a$ satisfies $<\delta,\alpha> = ||\alpha||^2$ whenever $\alpha$ is a simple root (see e.g. ~\cite{He}). \par
When $\gg = \su(n)$, we consider the standard set of roots $R= \{\e_{i,j}:=\epsilon_i-\epsilon_j|\ 1\leq i\neq j\leq n\ \}$ where the positive roots are given
by $R^+ = \{\e_{i,j}|\ i < j\ \}$. We may also normalize the Cartan Killing form by setting $||\alpha||^2=2$ for every root $\alpha$. It is immediate to compute $\d = \sum_{j=1}^n (n+1-2j)\e_j$, so that
$$||\delta||^2 = \frac 13 n(n-1)(n+1).$$
If condition~\eqref{condition} holds and $\bar\alpha$ is a simple root, then
\beq\label{su}2 = \frac{||\delta||^2 }{\dim_{\small{\bC}}M}\sum_{\alpha\in R^+}\frac{\langle \alpha,\bar\alpha\rangle}{\langle \alpha,\delta\rangle} =
\frac 23 (n+1) \sum_{\alpha\in R^+}\frac{\langle \alpha,\bar\alpha\rangle}{\langle \alpha,\delta\rangle}\ .\eeq
If we select $\bar \alpha = \e_{1,2}$ then the set $A:=\{\a\in R^+;\ \langle\a,\bar \a\rangle \neq 0\}$ is given by
$A = \{\e_{1,2},\ldots,\e_{1,n},\e_{2,3},\ldots,\e_{2,n}\}$ so that
$$\sum_{\alpha\in R^+}\frac{\langle \alpha,\bar\alpha\rangle}{\langle \alpha,\delta\rangle} = 1 + \frac 12\cdot \sum_{j=2}^{n-1}\frac 1{j} - \frac 12 \cdot\sum_{j=1}^{n-2}\frac 1j = \frac n{2(n-1)}, $$
so that condition \eqref{su} reads $6(n-1)=n(n+1)$, i.e. $n=2,3$.  On the other hand if $n=3$ then $\delta = 2\e_{1,3}$ and $\sum_{\a\in R^+}\frac{\a}{\langle \a,\d\rangle} = \frac 34 \e_{1,3}$, so that condition~\eqref{condition} holds true.\par
When $\gg = \so(2n+1)$ ($n\geq 2$) we consider the set of simple roots $\{\o_i-\o_{i+1}, \o_n\ i=1,\ldots,n-1 \}$ and the set $R^+=\{ \o_i\pm\o_j,\ 1\leq i<j\leq n\}\cup\{\o_i\ 1\leq i\leq n\}$ for an orthonormal basis $\{\o_i\}$ of $\gt$. We have $\d = \sum_{i=1}^n (2n-2i +1)\o_i$ and therefore
$||\d||^2 = \frac 13 n(4n^2-1)$. If we consider the simple root $\bar\a=\o_n$, the set of positive roots $A=\{\a\in R^+|\ \langle \a,\bar\a\rangle \neq 0\}$ is given by $A= \{\o_i\pm \o_n;\ i<n\}\cup\{ \o_n\}$ and therefore condition \eqref{condition} implies
$$\frac{\dim_{\small{\bC}} M} {||\d||^2}= \sum_{\alpha\in R^+}\frac{\langle \alpha,\bar\alpha\rangle}{\langle \alpha,\delta\rangle} = \sum_{i=1}^{n-1}\frac 1{2(n-i+1)} - \sum_{i=1}^{n-1} \frac 1{2(n-i)} + 1 = \frac 12\left(1+\frac 1n\right).$$
Since $\dim_{\small{\bC}}M = n^2$, we have $6n^2=(4n^2-1)(1+n)$, which has no integer solution $n\geq 2$.\par

When $\gg = \sp(n)$ ($n\geq 3$), we fix a standard system of simple roots $\{\o_1-\o_2,\ldots,\o_{n-1}-\o_n, 2\o_n\}$ with system of positive roots given by
$R^+= \{\o_i\pm\o_j, 2\o_i;\ 1\leq i< j\leq n\}$. Then $\d = 2\sum_{i=1}^n (n+1-i)\o_i$ and $||\d||^2 = \frac 23 n(n+1)(2n+1)$. If we choose the simple root $\a=2\o_n$ the set of positive roots $A=\{\a\in R^+|\ \langle \a,\bar\a\rangle \neq 0\}$ is given by $A= \{\o_i\pm \o_n;\ i<n\}\cup\{ 2\o_n\}$ and therefore condition \eqref{condition} implies
$$4\frac{\dim_{\small{\bC}} M} {||\d||^2}= \sum_{\alpha\in R^+}\frac{\langle \alpha,\bar\alpha\rangle}{\langle \alpha,\delta\rangle} = \sum_{i=1}^{n-1}\frac 1{n+2-i} - \sum_{i=1}^{n-1} \frac 1{n-i} + 1 = \frac 1{n+1} + \frac 1n - \frac 12.$$
Noting that  $\dim_{\small{\bC}}M = n^2$ and that the right hand side of the above equation is positive only for $n=3$, we immediately see that we have no integer solution.\par
When $\gg = \so(2n)$ ($n\geq 3$), we fix a standard system of simple roots $\{\o_1-\o_2,\ldots,\o_{n-1}-\o_n, \o_{n-1}+\o_n\}$ with system of positive roots given by $R^+= \{\o_i\pm\o_j;\ 1\leq i< j\leq n\}$. Then $\d = 2\sum_{i=1}^n (n-i)\o_i$ and $||\d||^2 = \frac 23 n(n-1)(2n-1)$. If we choose the simple root $\a=\o_1-\o_2$ the set of positive roots $A=\{\a\in R^+|\ \langle \a,\bar\a\rangle \neq 0\}$ is given by $A= \{\o_1-\o_2, \o_1\pm \o_i, \o_2\pm\o_i;\ 3\leq i\leq n\}$ and therefore condition \eqref{condition} implies
$$2\frac{\dim_{\small{\bC}} M} {||\d||^2}= \sum_{\alpha\in R^+}\frac{\langle \alpha,\bar\alpha\rangle}{\langle \alpha,\delta\rangle} = 1 +\frac 12 \left(\sum_{i=3}^{n}\frac 1{2n-1-i} + \frac 1{i-1} \right)- \frac 12 \left(\sum_{i=3}^{n} \frac 1{2n-2-i} + \frac 1{i-2}\right)=$$
$$= \frac 1{n-1} - \frac 1{4n-10} + \frac 12.$$
Since $\dim_{\small{\bC}} M = n^2-n$, the previous equation is equivalent to $3(n-1)(2n-5) = (2n-1)(n^2-n-3)$, which has no solution for $n\geq 3$.\par

In order to prove the last assertion regarding $M = \SU(3)/T^2$, we fix some notations. We consider the abelian subalgebra $\gt = \{\mbox{diag}(ia,ib,-i(a+b));\ a,b\in \bR\}\cong \bR^2$ and a standard basis of the complement $\gm = \gt^\perp$ w.r.t. the Cartan Killing form $B$, which is given by $B(X,Y) = 6\ \mbox{Tr}(XY)$:
\begin{equation}\label{basis} \begin{split} X_1 &= {\left(\begin{smallmatrix} 0&1&0\\ -1&0&0\\0&0&0\end{smallmatrix}\right)},\quad X_2 = {\left(\begin{smallmatrix} 0&0&1\\ 0&0&0\\-1&0&0\end{smallmatrix}\right)}, \quad X_3 = {\left(\begin{smallmatrix} 0&0&0\\ 0&0&1\\0&-1&0\end{smallmatrix}\right)} \\
Y_1 &= \left(\begin{smallmatrix} 0&i&0\\ i&0&0\\0&0&0\end{smallmatrix}\right),\quad Y_2 = \left(\begin{smallmatrix} 0&0&i\\ 0&0&0\\i&0&0\end{smallmatrix}\right)
, \quad Y_3 = \left(\begin{smallmatrix} 0&0&0\\ 0&0&i\\0&i&0\end{smallmatrix}\right).\end{split}\end{equation}
Given the set of simple roots $\{\e_{1,2},\e_{2,3}\}$, we fix the Weyl chamber $C = \{(a,b)\in \bR^2\cong\gt;\ -\frac a2<b<a\}$. It corresponds to the invariant complex structure $J$ such that $JX_i=Y_i$, $i=1,2,3$. The element $\d = 2\ \e_{1,3}$ has a $B$-dual given by $\bar\xi:=\frac 13\ {\mbox{diag}}(i,0,-i)$, i.e. the point $(\frac 13,0)\in \bR^2$.\par
Any $\xi\in C$ determines an invariant K\"ahler metric $g_\xi$ according to \eqref{forms}. The $\mbox{Ad}(T^2)$-submodules $\gm_i:=\mbox{Span}\{X_i,Y_i\}$ are mutually inequivalent and therefore they are $g_\xi$-orthogonal. Moreover we easily get $g_\xi(X_i,X_i)=g_\xi(Y_i,Y_i)$ ($i=1,2,3$) and
$$g_\xi(X_1,X_1)= 12(a-b);\ g_\xi(X_2,X_2) = 12(2a+b);\ g_\xi(X_3,X_3) = 12(a+2b).$$
If we now put $s:= a-b$ and $t:= a+2b$, then the set $\mathcal K_o$ of invariant K\"ahler metrics whose volume is equal to $vol(M,\bar g)$ is given by
$$\mathcal K_o \cong \{(s,t)|\ s,t > 0,\ st(s+t) = \frac 2{27}\}.$$
In order to estimate the first eigenvalue of $g_\xi$, we recall some well-known facts about the Laplacian of an invariant metric on a homogeneous space. Indeed,
if $\{v_1,\ldots,v_6\}$ is an orthonormal basis of $\gm$ w.r.t. an invariant metric $q$ on $\SU(3)/T^2$ and if $\rho:\SU(3)\to \SU(V)$ is an irreducible representation on a Hermitian vector space $V$ with $V^{T^2} = \{v\in V;\ hv=v\ \forall h\in T^2\} \neq \{0\}$, then the operator
$$D_\rho := \sum_{1=1}^6 \rho(v_i)^2$$
leaves $V^{T^2}$ invariant and its spectrum $\mbox{Spec}(D_\rho,V^{T^2})$ is contained in the spectrum of $\Delta_q$ acting on $C^\infty(M)$ (see e.g.~\cite{MU}).\par
We will consider the irreducible representation $\mbox{Ad}$ of $\SU(3)$ given by the adjoint representation $V=\su(3)^{\small{\bC}}$. In order to compute
$D_{\mbox{Ad}}$ relative to the metric $g_{(s,t)}\in \mathcal K_o$, we fix the $g_{(s,t)}$-orthonormal basis given by
$$v_1 = \frac 1{\sqrt{12 s}} X_1,\ v_2 = \frac 1{\sqrt{12 (s+t)}} X_2,\ v_3 = \frac 1{\sqrt{12 t}} X_3,\ v_i= Jv_{i-3},\ i=4,5,6. $$
The space $V^{T^2}$ coincides with the Lie subalgebra $\gt^{\small{\bC}}$ which can be identified with $\bC^2$ via $(z_1,z_2)\mapsto \mbox{diag}(z_1,z_2,-z_1-z_2)$. A lengthy but straightforward computation shows that the endomorphism $D_{\mbox{Ad}}$ of $\gt^{\small{\bC}}$ can be represented by the matrix
$$D_{\mbox{Ad}} = \frac 13 \cdot \left(\begin{matrix} \frac{3s+t}{s(s+t)}& -\frac t{s(s+t)}\\ \frac{s-t}{st} & \frac{2s+t}{st}\end{matrix}\right)$$
and since we are considering metrics in $\mathcal K_o$, i.e. with $st(s+t) = \frac 2{27}$, it can be rewritten as
$$D_{\mbox{Ad}} = \frac 92 \cdot \left(\begin{matrix} t(3s+t)& -t^2\\ s^2-t^2 & (2s+t)(s+t)\end{matrix}\right).$$
Its eigenvalues are given by $\frac 92 (t^2+s^2+3st\pm\sqrt{t^4+s^4-s^2t^2})$. Note that we get eigenvalues $\{2,3\}$ for $s=t=\frac 13$, which corresponds to the K\"ahler Einstein metric $\bar g$.\par
Using a software, e.g. Maple, it can be shown that the function $f(s,t) =: \frac 92 (t^2+s^2+3st-\sqrt{t^4+s^4-s^2t^2})$ restricted to the curve $\{st(s+t) = \frac 2{27}\}$ attains its maximum value $2$ at $s=t=\frac 13$. This shows in particular that $\lambda_1(g_{(s,t)}) < 2$ for $(s,t)\in \mathcal K_o$, $(s,t)\neq (\frac 13,\frac 13)$ and our last claim is proved. \end{proof}

\bigskip
\bigskip\bigskip

\bigskip\bigskip

\end{document}